\newcommand{\Ann}{\operatorname{Ann}}
\newcommand{\depth}{\operatorname{depth}}
\newcommand{\grade}{\operatorname{grade}}
\newcommand{\height}{\operatorname{height}}
\newcommand{\tra}{\operatorname{Tr}}
\newcommand{\N}{{\mathbb N}}
\theoremstyle{plain}
\newtheorem{theorem}{Theorem}
\newtheorem{lemma}[theorem]{Lemma}
\newtheorem{proposition}[theorem]{Proposition}
\theoremstyle{definition}
\newtheorem*{ack}{Acknowledgements}
\theoremstyle{remark}
\newtheorem{remark}[theorem]{Remark}
\newtheorem{Example}[theorem]{Example}
\newcounter{hours}\newcounter{minutes}
\newcommand{\printtime}{%
        \setcounter{hours}{\time/60}%
        \setcounter{minutes}{\time-\value{hours}*60}%
        \thehours\,h\ \theminutes\,min}
\begin{document}

\title[ CM Ideal]{ On Cohen-Macaulayness and depth of ideals in invariant rings}
\date{\today \printtime}

\author{ Martin Kohls}
\address{Technische Universit\"at M\"unchen \\
 Zentrum Mathematik-M11\\
Boltzmannstrasse 3\\
 85748 Garching, Germany}
\email{kohls@ma.tum.de}

\author{M\"uf\.it Sezer}
\address { Department of Mathematics, Bilkent University,
 Ankara 06800 Turkey}
\email{sezer@fen.bilkent.edu.tr}
\thanks{Second author is supported by a grant from T\"ubitak:112T113}

\subjclass[2000]{13A50}

\begin{abstract}
We investigate the presence of  Cohen-Macaulay ideals in invariant
rings and show  that an ideal of an invariant ring corresponding
to a modular representation of a $p$-group is not Cohen-Macaulay
unless the invariant ring itself is. As an intermediate result, we obtain that
non-Cohen-Macaulay factorial rings cannot contain Cohen-Macaulay ideals.
For modular cyclic groups of prime order,
we show that the quotient of the invariant ring modulo the transfer ideal is
always Cohen-Macaulay, extending a result of Fleischmann.
\end{abstract}

\maketitle

\section{Introduction}
The depth and Cohen-Macaulay property of invariant rings have
always been among  the major interests of invariant theorists, see
the references below. In this paper, we consider ideals of
invariant rings (as  modules over the latter), and investigate
their depth and Cohen-Macaulayness. The original goal of this
paper was to find  filtrations of the invariant rings with
Cohen-Macaulay quotients  (a weakening of being ``sequentially
Cohen-Macaulay'' as introduced in \cite[section
III.2]{StanleyCombinatorics}). However, the results of this paper
show that in many cases, invariant rings fail to contain any
Cohen-Macaulay ideal, so the goal is missed in the first step
already. Before we go into more details, we fix our setup.
 Let $V$ be a finite dimensional representation of a group $G$ over
a field $K$.  The representation is called modular if the
characteristic of $K$ divides the order of $G$. Otherwise, it is
called non-modular.  There is an induced action on the symmetric
algebra $K[V]:=S(V^{*})$ given by $\sigma (f)=f\circ \sigma^{-1}$
for $\sigma \in G$ and $f\in K[V]$. We let
\[
K[V]^G:=\{f\in  K[V] \mid \sigma(f)=f   \; \text{ for all }
\sigma\in G\}
\]
 denote the
 subalgebra of invariant polynomials in $K[V]$. For any
 non-modular representation, $K[V]^G$ is a Cohen-Macaulay ring \cite{HochsterEagon}. In the modular case,  on
 the other hand, $K[V]^G$ almost always fails
 to be Cohen-Macaulay, see \cite{KemperOnCM}. The depth of
 $K[V]^G$  has attracted much attention and has been determined for various families
 of  representations, see for example
 \cite{CampEtAl,FleischmannCohomConnectivity,SmithKoszul,ElmerAssoc,KemperSurvey}. In this
 paper we consider ideals of $K[V]^{G}$ as modules over $K[V]^G$. We
 show that, if $V$ is a modular representation of a $p$-group,
 then $K[V]^G$ does not contain a  Cohen-Macaulay ideal unless
 $K[V]^G$ is Cohen-Macaulay itself. In fact, our results hold in a
 broader generality. We first show that a Cohen-Macaulay ideal in
 an  affine domain can not have height bigger than one. If, in
 addition, the  affine domain is factorial, then only principal
ideals can be
 Cohen-Macaulay. So we get the desired implication for the groups and their representations whose invariants are factorial.
We also include an example that shows that the condition that the
affine domain is factorial can not be dropped. We then restrict to
modular representations of a cyclic group of prime order. Our main
result here is that the quotient $K[V]^{G}/I^{G}$ of the invariant
ring modulo the transfer ideal $I^{G}$ is Cohen-Macaulay. Note
that this extends results of Fleischmann \cite{MR1714612} in this
case, namely that $K[V]^{G}/\sqrt{I^{G}}$ is Cohen-Macaulay, and
that $\sqrt{I^{G}}=I^{G}$ if $V$ is projective. This also allows
us to compute the depth of the transfer ideal. We end with a
reduction result that reduces computing the depth of a
$K[V]^{G}$-module to computing a grade of the transfer ideal.

We refer the reader to \cite{MR1249931,EddyDavidBook,DerksenKemper} for
more background in modular invariant theory.

\section{Preliminaries}
In this section we summarize our notation as well as some basic
results that we use in our computations.
Let $R=\bigoplus_{d=0}^{\infty}R_{d}$ be a graded affine
$K$-algebra such that $R_{0}=K$, and
$M=\bigoplus_{d=0}^{\infty}M_{d}$ a finitely generated graded
nonzero $R$-module. We call $R_{+}:=\bigoplus_{d=1}^{\infty}R_{d}$ the
maximal homogeneous ideal of $R$. A sequence of homogeneous
elements $a_{1},\ldots,a_{k}\in R_{+}$ is called $M$-regular if
each $a_{i}$ is a nonzero divisor on $M/(a_{1},\ldots,a_{i-1})M$
for $i=1,\ldots,k$. For a homogeneous ideal $I\subseteq R_{+}$,
the maximal length of an $M$-regular sequence lying in $I$
 is called the grade of $I$ on $M$, denoted by $\grade (I,M)$.
Furthermore, one calls
 $\depth(M):=\grade(R_{+},M)$ the depth of $M$.
Recall that we have
 $\depth(M)\le \dim (M)$ (where $\dim(M):=\dim(R/\Ann_{R}(M))$ denotes the
 Krull dimension), and $M$ is called \emph{Cohen-Macaulay} if equality
 holds.

 By Noether-Normalization, $R$ contains a homogeneous system of
 parameters (hsop), i.e., algebraically independent homogeneous elements
 $a_{1},\ldots,a_{n}\in R$ such that $R$ is finitely generated as a module
 over the (polynomial) subalgebra $A:=K[a_{1},\ldots,a_{n}]$. Note that
 $n=\dim(R)$ is uniquely determined. Any subset of an hsop is called a partial
 hsop (phsop). If $R$ is also a domain, then homogeneous elements
 $a_{1},\ldots,a_{k}\in R_{+}$ form a phsop if and only if
 $\height(a_{1},\ldots,a_{k})=k$, (see \cite[Lemma 1.5]{KemperOnCM}, \cite[Theorem A. 16]{MR1251956}). Note that $M$ is also an
 $A$-module, and from
 the graded Auslander-Buchsbaum formula \cite[Exercise 19.8]{Eisenbud}
 we get that $M$ is free as an $A$-module if and only if its depth
 as an $A$-module is equal to $\dim (A)$. But since the depths of $M$
 as an $A$- and as an $R$-module are equal (see \cite[Lemma
 3.7.2]{DerksenKemper} or \cite[Exercise
 1.2.26]{MR1251956}), this is also equivalent to the condition that
 the depth of $M$ as an $R$-module is $\dim (R)=\dim (A)$. In other words: $M$
 is free as an $A$-module if and only if $M$ is Cohen-Macaulay and
 $\dim(M)=\dim(R)$, i.e., $M$ is \emph{maximal Cohen-Macaulay}.

We include the following standard facts about depth for the reader's
convenience.

\begin{lemma}\cite[Proposition 1.2.9]{MR1251956}\label{StandardIneqs}
Assume that  $I$ is a homogeneous nonzero proper ideal of the
graded affine ring $R$. Then we have the following inequalities.
\begin{itemize}
\item[(a)] $\depth(R)\ge \min\{\depth(I),\depth(R/I)\}$.
\item[(b)] $\depth(I)\ge\min\{\depth(R),\depth(R/I)+1\}$.
\item[(c)] $\depth(R/I)\ge\min\{\depth(I)-1,\depth(R)\}$.
\end{itemize}
\end{lemma}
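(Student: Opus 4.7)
The plan is to apply the long exact sequence in local cohomology to the short exact sequence
$$0 \lra I \lra R \lra R/I \lra 0$$
of finitely generated graded $R$-modules. The key input is the standard characterization of depth in the graded affine setting: for a nonzero finitely generated graded $R$-module $M$,
$$\depth(M) \;=\; \min\{i \ges 0 : H^i_{R_+}(M) \neq 0\}.$$
Since $I$ is a nonzero proper homogeneous ideal, both $I$ and $R/I$ are nonzero, so this applies to all three modules in the sequence.

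Set $d_I := \depth(I)$, $d_R := \depth(R)$, $d_{R/I} := \depth(R/I)$, and write $H^i := H^i_{R_+}$ for brevity. The long exact sequence contains the three exact segments
$$H^i(I) \lra H^i(R) \lra H^i(R/I),$$
$$H^{i-1}(R/I) \lra H^i(I) \lra H^i(R),$$
$$H^i(R) \lra H^i(R/I) \lra H^{i+1}(I).$$
For (a), if $i < \min\{d_I, d_{R/I}\}$ then the outer terms of the first segment vanish, so $H^i(R)=0$; hence $\depth R \ges \min\{d_I, d_{R/I}\}$. For (b), if $i < \min\{d_R, d_{R/I}+1\}$ then $H^i(R)=0$ and $H^{i-1}(R/I)=0$, so the second segment forces $H^i(I)=0$. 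For (c), if $i < \min\{d_I-1, d_R\}$ then $i+1 < d_I$ gives $H^{i+1}(I)=0$, and $H^i(R)=0$, so the third segment forces $H^i(R/I)=0$. Each case yields the claimed inequality.

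The only nontrivial ingredient is the identification of depth with the vanishing threshold of local cohomology, which is standard in the graded affine context \cite[Theorem 3.5.7]{MR1251956}; everything else is bookkeeping on a long exact sequence, so I expect no real obstacle. A more elementary alternative would be to induct on the right-hand side of each inequality: use prime avoidance to choose a homogeneous $a \in R_+$ that is a nonzerodivisor on the two modules whose depths appear on the right, note that multiplication by $a$ is then injective on the third module as well so the snake lemma preserves exactness after reduction mod $a$, and appeal to the inductive hypothesis. The local cohomology route is cleaner and treats all three inequalities in parallel.
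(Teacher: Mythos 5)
Your proof is correct. Note that the paper does not actually prove this lemma -- it is quoted directly from \cite[Proposition 1.2.9]{MR1251956} -- so there is no internal argument to compare against; your local cohomology argument is the standard one and is formally identical to the cited reference's proof, which uses the characterization of depth via vanishing of $\operatorname{Ext}^{i}_{R}(K,-)$ (equivalent to the vanishing threshold of $H^{i}_{R_{+}}(-)$) together with the corresponding long exact sequence. The case analysis in all three parts checks out, and you rightly observe that $I\neq 0$ and $R/I\neq 0$ are needed for the depth characterization to apply to each term.
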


This lemma implies that $\depth(I)$ and $\depth(R/I)$ are often
strongly related:

\begin{lemma}\label{StandardCor1}
Assume that $I$ is a homogeneous nonzero proper ideal of the
graded affine ring $R$.
\begin{itemize}
\item[(a)] If one of the following conditions
\begin{itemize}
\item[(i)] $\depth(R)>\depth(I)$, \item[(ii)]
$\depth(R)>\depth(R/I)$, \item[(iii)] $R$ is a Cohen-Macaulay
domain
\end{itemize}
holds, then
\[
\depth(I)=\depth(R/I)+1.
\]
\item[(b)] If $\depth(I)>\depth(R)$, then $\depth(R/I)=\depth(R)$.
\item[(c)] If $\depth(R/I)>\depth(R)$, then $\depth(I)=\depth(R)$.
\end{itemize}
\end{lemma}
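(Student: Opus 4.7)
The plan is to derive all three parts of the lemma from Lemma~\ref{StandardIneqs} by the same mechanism: each of $\depth(I)$ and $\depth(R/I)$ appears inside a minimum on the right-hand side of one of those inequalities, and whenever one of these quantities is strictly smaller than $\depth(R)$ the relevant minimum is forced, collapsing the bound to a single value. No input beyond the three inequalities (a)--(c) of the previous lemma will be needed.

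For part (a) I would first reduce the three hypotheses to hypothesis (ii), the strict inequality $\depth(R)>\depth(R/I)$. Case (iii) reduces to (ii) because in a Noetherian domain a nonzero proper ideal has positive height, so $\depth(R/I)\le\dim(R/I)<\dim(R)=\depth(R)$. Case (i) reduces to (ii) via Lemma~\ref{StandardIneqs}(b): if $\depth(R)>\depth(I)$, then the minimum defining the lower bound on $\depth(I)$ cannot be $\depth(R)$, so it equals $\depth(R/I)+1$, yielding $\depth(R/I)+1\le\depth(I)<\depth(R)$. Having reduced to (ii), I feed the strict inequality $\depth(R/I)<\depth(R)$ into Lemma~\ref{StandardIneqs}(c) to force the minimum to be $\depth(I)-1$, producing $\depth(I)\le\depth(R/I)+1$; and into Lemma~\ref{StandardIneqs}(b) to force the minimum to be $\depth(R/I)+1$, producing $\depth(I)\ge\depth(R/I)+1$. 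Equality follows.

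Parts (b) and (c) are dual and equally quick. For (b), the hypothesis $\depth(I)>\depth(R)$ inserted into Lemma~\ref{StandardIneqs}(c) identifies the minimum as $\depth(R)$, so $\depth(R/I)\ge\depth(R)$; inserted into Lemma~\ref{StandardIneqs}(a) it forces the minimum to be $\depth(R/I)$, so $\depth(R)\ge\depth(R/I)$, and combining gives equality. Part (c) is the symmetric argument with the roles of $I$ and $R/I$ swapped: $\depth(R/I)>\depth(R)$ pinpoints the minimum in Lemma~\ref{StandardIneqs}(b) as $\depth(R)$ and the minimum in Lemma~\ref{StandardIneqs}(a) as $\depth(I)$, and the two resulting inequalities combine to $\depth(I)=\depth(R)$. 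There is essentially no obstacle here; every step is a short bookkeeping of which argument realizes the minimum. The only mild subtlety is the reduction of (iii) to (ii), where I rely on the fact that a nonzero ideal in a Noetherian domain has positive height so the Krull dimension of $R/I$ strictly drops, which is what passes the Cohen-Macaulay hypothesis on $R$ to a strict depth inequality between $R$ and $R/I$.
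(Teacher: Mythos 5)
Your proof is correct and follows essentially the same route as the paper: every claim is extracted from Lemma~\ref{StandardIneqs} by determining which term realizes the relevant minimum, and case (iii) of part (a) is reduced via $\depth(R/I)\le\dim(R/I)<\dim(R)=\depth(R)$ exactly as in the paper. The only cosmetic difference is that you reduce case (i) to case (ii), while the paper argues case (i) directly from Lemma~\ref{StandardIneqs}(b) and (c); both are two-line bookkeeping steps.
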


\begin{proof}
(a) Assume first $\depth(R)>\depth(I)$. From Lemma
\ref{StandardIneqs} (b) it follows that $\depth(I)\ge
\depth(R/I)+1$, and from Lemma \ref{StandardIneqs} (c)  we get
$\depth(R/I)\ge \depth(I)-1$, implying the desired equality.
Secondly, assume $\depth(R)>\depth(R/I)$. From Lemma
\ref{StandardIneqs} (b) it follows that $\depth(I)\ge
\depth(R/I)+1$, and from Lemma \ref{StandardIneqs} (c) we have
$\depth(R/I)\ge \depth(I)-1$ so we obtain the result again.
Finally assume $R$ is a Cohen-Macaulay domain. As $R$ is a domain
and $I\ne \{0\}$, it follows that $\dim(R/I)<\dim(R)$. Hence we
have the inequality $\depth(R/I)\le\dim(R/I)<\dim(R)=\depth(R)$,
so the assertion follows from (ii).

Statement (b) follows similarly from Lemma \ref{StandardIneqs} (a) and
(c). Statement (c) follows from Lemma \ref{StandardIneqs} (a) and
(b).
\end{proof}

For example, if $R$ has positive depth, then the homogeneous
maximal ideal always has depth one by the above lemma, as its
quotient is zero-dimensional. Now we note that for any given
number $1\le k\le \depth(R)$, there exists an ideal of depth  $k$:

\begin{lemma}\label{DepthRegSeq}
Assume that the homogeneous elements $a_{1},\ldots,a_{k}$ of positive degree
form a regular sequence of  $R$. Then
\[
\depth((a_{1},\ldots,a_{k})R)=\depth(R)+1-k.
\]
\end{lemma}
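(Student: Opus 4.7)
The plan is to combine the standard fact that dividing out a regular sequence of length $k$ drops depth by exactly $k$ with Lemma \ref{StandardCor1}(a), which pins down $\depth(I)$ in terms of $\depth(R/I)$ whenever the depth of $R$ strictly exceeds that of its quotient.

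First I would record that $I = (a_1, \ldots, a_k)R$ is a nonzero proper homogeneous ideal: nonzero because $a_1 \ne 0$ (it begins a regular sequence), and proper because all $a_i$ lie in $R_+$. Then I would invoke the familiar identity
\[
\depth\!\bigl(R/(a_1,\ldots,a_k)R\bigr) \;=\; \depth(R) - k,
\]
which is immediate from the definition of a regular sequence: any maximal regular sequence on $R/I$ extends $a_1,\ldots,a_k$ to a maximal regular sequence on $R$. (This is essentially \cite[Proposition 1.2.10]{MR1251956} and can be taken as background.) In particular, since $k \ge 1$, we have $\depth(R/I) < \depth(R)$.

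Now condition (ii) of Lemma \ref{StandardCor1}(a) is satisfied, so that lemma yields
\[
\depth(I) \;=\; \depth(R/I) + 1 \;=\; \depth(R) - k + 1,
\]
which is exactly the claim. Two small things are worth double-checking in writing this up: that $\depth(R) \ge k$ (automatic from the existence of the regular sequence, so $\depth(R) + 1 - k \ge 1$ makes sense), and that the Lemma's hypothesis of $I$ being nonzero and proper is verified explicitly.

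I do not foresee a genuine obstacle here; the statement is really a two-line consequence of Lemma \ref{StandardCor1} once the depth of $R/I$ is identified. The only mild subtlety is the extremal case $k = \depth(R)$, where $\depth(R/I) = 0$, but Lemma \ref{StandardCor1}(a)(ii) still applies since $\depth(R) \ge k \ge 1 > 0 = \depth(R/I)$, giving $\depth(I) = 1$ as required.
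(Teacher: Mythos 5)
Your proposal is correct and follows exactly the paper's own argument: compute $\depth(R/(a_1,\ldots,a_k)R)=\depth(R)-k<\depth(R)$ and then apply Lemma \ref{StandardCor1}(a)(ii). The extra care you take in verifying that the ideal is nonzero and proper, and in checking the extremal case $k=\depth(R)$, is sound but adds nothing beyond the paper's two-line proof.
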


\begin{proof}
We have $\depth(R/(a_{1},\ldots,a_{k})R)=\depth(R)-k<\depth(R)$, and the result
follows from the previous lemma.
\end{proof}

\section{Cohen-Macaulay ideals in affine domains}
The main result of this section is Theorem  \ref{factorialCM}
where it is shown that only principal ideals can be Cohen-Macaulay
in factorial affine domains. Nevertheless even when the affine
domain is not factorial, the height of a Cohen-Macaulay ideal can
be at most one.

\begin{lemma}\label{height2notCM}
Assume that $R$ is a graded affine domain, and $I\ne R$ a
homogeneous ideal of height at least $2$. Then $I$ is not
Cohen-Macaulay as an $R$-module.
\end{lemma}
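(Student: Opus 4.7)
The plan is to assume $I$ is Cohen-Macaulay and derive $R=I$, contradicting $I\neq R$. I work over a homogeneous Noether normalization $A=K[a_{1},\ldots,a_{d}]\subseteq R$ (with $d=\dim R$), over which $I$ will turn out to be free.

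Since $I$ is a nonzero ideal in the domain $R$, $\Ann_{R}(I)=0$ and so $\dim I=\dim R=d$; with the Cohen-Macaulay hypothesis $I$ is thus maximal Cohen-Macaulay over $R$, hence (the depths over $R$ and over $A$ agreeing, as recalled in the preliminaries) maximal Cohen-Macaulay over the regular ring $A$. Therefore $I$ is $A$-free, say $I\cong A^{s}$. Moreover, the affine domain $R$ is catenary and equidimensional, so $\dim(R/I)=d-\height I\leq d-2$; hence $R/I$ has $A$-codimension at least two, and since $A$ is Cohen-Macaulay this yields $\operatorname{Ext}^{i}_{A}(R/I,A)=0$ for $i=0,1$.

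Applying $\operatorname{Hom}_{A}(-,A)$ to $0\to I\to R\to R/I\to 0$ and using these two vanishings, the restriction map $\operatorname{Hom}_{A}(R,A)\to\operatorname{Hom}_{A}(I,A)$ becomes an isomorphism. Viewed inside $\operatorname{Hom}_{\operatorname{Frac}(A)}(\operatorname{Frac}(R),\operatorname{Frac}(A))$---where both sides consist of the functionals carrying $R$ (respectively $I$) into $A$---this iso is literal equality of subsets; dualizing once more, $R^{**}=I^{**}$ as $A$-submodules of $\operatorname{Frac}(R)$. Since $I\cong A^{s}$ is free and hence reflexive, $I^{**}=I$, and since $R$ is torsion-free over $A$ the canonical map $R\to R^{**}$ is injective. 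Consequently $R\subseteq R^{**}=I^{**}=I\subseteq R$, forcing $R=I$, the desired contradiction. The one place requiring care is the identification of $R^{**}$ and $I^{**}$ as \emph{the same} submodule of $\operatorname{Frac}(R)$ (not merely as abstractly isomorphic $A$-modules); for this one pins down the iso $R^{*}\cong I^{*}$ concretely as restriction of functionals, so that the evaluation descriptions $M^{**}=\{v:\phi(v)\in A\text{ for all }\phi\in M^{*}\}$ yield coincident subsets once the duals themselves do.
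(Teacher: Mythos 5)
Your argument is correct, but it takes a genuinely different route from the paper's. The paper exploits the height hypothesis by choosing $p,q\in I$ that form a partial hsop, extending to an hsop generating $A$, and then computing directly with a free $A$-basis $g_{1},\ldots,g_{m}$ of $I$: from $p=\sum a_{i}g_{i}$, $q=\sum b_{i}g_{i}$ one gets $qa_{i}=pb_{i}$, and since $p,q$ are distinct variables of the polynomial ring $A$ this forces $q\mid b_{i}$, whence $1=\sum b_{i}'g_{i}\in I$. You instead take an arbitrary Noether normalization and run a duality argument: $\height I\ge 2$ gives $\dim_{A}(R/I)\le d-2$, hence (as $A$ is regular, so grade equals height) $\operatorname{Ext}^{0}_{A}(R/I,A)=\operatorname{Ext}^{1}_{A}(R/I,A)=0$, the restriction $\operatorname{Hom}_{A}(R,A)\to\operatorname{Hom}_{A}(I,A)$ is an isomorphism, and biduality inside $\operatorname{Frac}(R)$ together with reflexivity of the free module $I$ yields $R\subseteq R^{**}=I^{**}=I$. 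All the steps check out, including the ones needing care: $\dim(R/I)=d-\height I$ (affine domains are catenary and equidimensional), the identification of both duals as sets of functionals on $\operatorname{Frac}(R)$ carrying the respective module into $A$ (legitimate because $I$ and $R$ are torsion-free of full rank over $A$), and $I^{**}=I$ for $I$ free. Both proofs hinge on the same two inputs --- maximal Cohen--Macaulay over the regular ring $A$ implies free, and height at least two --- but yours is homological and conceptual (it really only needs $I$ to be \emph{reflexive} over $A$, of which freeness is a special case), while the paper's is elementary and self-contained, requiring nothing beyond divisibility in a polynomial ring. The paper's version is better suited to its audience; yours generalizes more readily.
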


\begin{proof}
As $I$ is homogeneous of height at least two, it contains a phsop
$p,q$ of $R$. We extend this phsop to an hsop of $R$ and consider
the $K$-subalgebra $A$ of $R$ generated by this hsop, i.e.,
$A=K[hsop]=K[p,q,\ldots]$. Then $A$ is isomorphic to a polynomial
ring over $K$ in $\dim(R)$ variables.  Assume by way of
contradiction that $I$ is Cohen-Macaulay as an $R$-module. Then
$I$ is free as an $A$-module, i.e., there exist elements
$g_{1},\ldots,g_{m}\in I$ such that $I=\bigoplus_{i=1}^{m}Ag_{i}$.
As $p,q$ are elements of $I$, we can find unique elements
$a_{i},b_{i}\in A$ for $i=1,\ldots,m$ such that
$p=\sum_{i=1}^{m}a_{i}g_{i}$ and $q=\sum_{i=1}^{m}b_{i}g_{i}$.
Multiplying both equations with $q$ and $p$ respectively, we get
$\sum_{i=1}^{m}(qa_{i})g_{i}=pq=\sum_{i=1}^{m}(pb_{i})g_{i}$. As
$qa_{i},pb_{i}\in A$ and $g_{1},\ldots,g_{m}$ is a free $A$-basis
of $I$, we get $qa_{i}=pb_{i}$ for all $i=1,\ldots,m$. As $p,q$
are different variables in the polynomial ring $A$, it follows
$p|a_{i}$ and $q|b_{i}$ for all $i$. Therefore there exist
$b_{i}'\in A$ such that $b_{i}=qb_{i}'$ for $i=1,\ldots,m$. Hence
we get $q=\sum_{i=1}^{m}b_{i}g_{i}=\sum_{i=1}^{m}qb_{i}'g_{i}$,
and as we are in a domain dividing by $q$ yields
$1=\sum_{i=1}^{m}b_{i}'g_{i}\in \bigoplus_{i=1}^{m}Ag_{i}=I$. This
implies $I=R$, contradicting the hypothesis $I\ne R$ of the lemma.
\end{proof}

\begin{theorem}\label{factorialCM}
Assume that $R$ is a graded factorial affine domain.
 If $I \ne R$ is a homogeneous
ideal which is not principal, then $I$ is not Cohen-Macaulay as an
$R$-module.
 Therefore,  if $R$  is not Cohen-Macaulay, then $R$ does not contain any nonzero
homogeneous Cohen-Macaulay ideal (as an $R$-module).

\end{theorem}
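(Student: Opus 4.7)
The plan is to reduce the non-principal case to the height $\ges 2$ case already handled by Lemma \ref{height2notCM}, by extracting a common factor using factoriality. In a graded factorial domain, a finite set of homogeneous elements has a homogeneous gcd, which is the essential tool.

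First I would pick homogeneous generators $f_{1},\ldots,f_{m}$ of $I$ and let $d$ be a homogeneous gcd of them in $R$. Writing $f_{i}=dg_{i}$ for homogeneous $g_{i}$ with $\gcd(g_{1},\ldots,g_{m})=1$, I set $J=(g_{1},\ldots,g_{m})$. Then $I=dJ$, and the map $J\to I$ given by multiplication by $d$ is an isomorphism of graded $R$-modules (up to a degree shift), because $R$ is a domain. Hence $\depth(I)=\depth(J)$ and $\dim(I)=\dim(J)$, so $I$ is Cohen-Macaulay if and only if $J$ is.

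Next I would show $J$ has height at least $2$. Since $R$ is factorial, every height one prime ideal of $R$ is principal, generated by a homogeneous prime element. If $J$ were contained in such a prime $(p)$, then $p$ would divide every $g_{i}$, contradicting $\gcd(g_{1},\ldots,g_{m})=1$. Also $J\neq R$, for otherwise $I=dJ=(d)$ would be principal, against our assumption. So $J$ is a proper homogeneous ideal not contained in any height one prime, hence $\height(J)\ges 2$. Applying Lemma \ref{height2notCM} to $J$, we conclude $J$ is not Cohen-Macaulay, and therefore neither is $I$.

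For the second assertion, assume $R$ is not Cohen-Macaulay and suppose for contradiction that $I$ is a nonzero homogeneous Cohen-Macaulay ideal of $R$. By the first part, $I$ must be principal, say $I=(f)$ with $f\neq 0$. Multiplication by $f$ gives a graded $R$-module isomorphism from a degree shift of $R$ onto $I$, so $\depth(R)=\depth(I)$ and $\dim(R)=\dim(I)$. Thus $R$ would be Cohen-Macaulay, a contradiction. The only step that requires mild care is verifying that a homogeneous gcd exists and that the isomorphism $J\cong I$ is genuinely an isomorphism of $R$-modules; both are standard consequences of $R$ being a graded factorial domain.
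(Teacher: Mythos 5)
Your proposal is correct and follows essentially the same route as the paper: extract the gcd $d$ of homogeneous generators, observe that multiplication by $d$ gives an $R$-module isomorphism $J\cong I$ preserving depth and dimension, use that height-one primes in a factorial domain are principal to force $\height(J)\ge 2$, and invoke Lemma \ref{height2notCM}. The only cosmetic difference is that you argue $\height(J)\ge 2$ directly before applying the lemma, whereas the paper assumes $I$ is Cohen-Macaulay, deduces $\height(J)\le 1$ from the lemma, and then derives the contradiction with the gcd; the content is identical.
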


\begin{proof}
 Assume by way of contradiction that $I$ is Cohen-Macaulay and not principal.  Let $a_{1},\ldots,a_{n}$
 denote a finite set of generators of $I$. As we are in a
factorial ring, we can consider the greatest common divisor $d$ of
those elements. Then we have $I=(a_{1},\ldots,a_{n})\subsetneq
(d)$, where the inclusion is strict as $I$ is not a principal
ideal by assumption. As $R$ is a domain and $d\mid a_{i}$ for all
$i$, the elements $\frac{a_{i}}{d}\in R$ are well defined, and we
can consider the ideal
$J:=(\frac{a_{1}}{d},\ldots,\frac{a_{n}}{d})=\frac{1}{d}I$. Note
that from $I\subsetneq (d)$ it follows that $J\subsetneq (1)=R$,
so $J$ is a proper ideal of $R$. Multiplication by $d$ yields an
$R$-module isomorphism from $J$ to $I$, and therefore $J$ is also
Cohen-Macaulay as an $R$-module. From Lemma \ref{height2notCM} it
follows that the height of $J$ is at most $1$. But $R$ is a domain
and $J\ne 0$, so the height of $J$ is $1$. It follows that there
exists a prime ideal $\wp$ of $R$ of height one such that
$I\subseteq \wp$. As $R$ is factorial, height one primes are
principal, and so $\wp$ is generated by a prime element $p$, so we
have $J\subseteq \wp=(p)$, which implies that $p$ is a common
divisor of $\frac{a_{1}}{d},\ldots,\frac{a_{n}}{d}$. This is a
contradiction, as $d$ is the greatest common divisor of
$a_{1},\ldots,a_{n}$.

Now the second assertion of the theorem  follows from the first and the
fact that principal ideals of $R$ are isomorphic to $R$ as
$R$-modules.
\end{proof}

We demonstrate two examples of affine domains with non-principal
Cohen-Macaulay ideals. First one is a Cohen-Macaulay ring, the
second one is not. Therefore,  a non-Cohen-Macaulay ring may
contain a Cohen-Macaulay ideal, and  the hypothesis of $R$ being
factorial can not be dropped out in the previous theorem.

\begin{Example}
Consider the subalgebra $R=K[x^{2},y^{2},xy]$ of the polynomial
ring $K[x,y]$ in two variables. Note that $R$ is not factorial as
the equality $x^{2}\cdot y^{2}=(xy)\cdot(xy)$ shows. We claim that
the ideal $I=(x^{2},xy)$ of $R$ is Cohen-Macaulay and not
principal. Clearly $I$ is not principal,  because $R$ is a graded
ring that starts in degree $2$.  We now consider the hsop
$x^{2},y^{2}$ of $R$ and the subalgebra $A=K[x^{2},y^{2}]$
generated by the hsop. We claim that we have the direct sum
decompositions $R=A\oplus Axy$ and $I=Ax^{2}\oplus A xy$. In both
cases, the directness of the sum follows as in the first summands
all $x$ degrees are even, while in the second summands all $x$
degrees are odd. As both sums contain the respective ideal
generators, it only remains to show that both sums are invariant
under multiplication with $xy$. For the sum for $R$, this follows
from $xy\cdot xy=x^2y^2\in A$. For the sum for $I$, we have
$xy\cdot x^{2}=x^{2}\cdot xy\in Axy$ and $xy\cdot xy=y^{2}\cdot
x^{2}\in Ax^{2}$. Therefore $R$ and $I$ are free $A$-modules, so
$R$ and $I$ are Cohen-Macaulay as $R$-modules. Also note that
$I\subseteq \sqrt{(x^{2})}$, as $(xy)^{2}=y^{2}\cdot x^{2}\in
(x^{2})$. Thus, $\height(I)\le\height((x^{2}))=1$, as  predicted
by Lemma \ref{height2notCM}.
\end{Example}

We state the following example  as a proposition.

\begin{proposition}
Consider the subalgebra $R:=K[x^{4},x^{3}y,xy^{3},y^{4}]$ of the
polynomial ring $K[x,y]$. Then the ideal $I:=(x^{4},x^{3}y)$ of
$R$ is of height one and Cohen-Macaulay as an $R$-module. (While
$R$ is not Cohen-Macaulay and not factorial).
\end{proposition}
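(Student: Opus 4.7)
The plan is to imitate the strategy of the previous example: describe $R$ and $I$ explicitly by their monomial supports, and then show $I$ is a free module over the polynomial subalgebra $A := K[x^4, y^4]$.

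First I would determine the monomial basis of $R$. Writing a product of generators as
\[
(x^4)^a (x^3 y)^b (xy^3)^c (y^4)^d = x^{4a+3b+c}\, y^{b+3c+4d},
\]
one sees that $R$ has $K$-basis given by the monomials $x^i y^j$ where $(i,j)$ lies in the submonoid $M \subseteq \N^2$ generated by $(4,0),(3,1),(1,3),(0,4)$. A direct case analysis yields
\[
M = \{(i,j) \in \N^2 : i + j \equiv 0 \pmod{4}\} \setminus \{(2,2)\},
\]
the congruence being forced by $i+j = 4(a+b+c+d)$ and $(2,2)$ being the unique nonrepresentable bidegree. In particular $R$ starts in total degree $4$, so each of the four generators is irreducible and the identity $x^3 y \cdot xy^3 = x^4 \cdot y^4$ witnesses the non-factoriality of $R$.

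To see $R$ is not Cohen-Macaulay, I would check that the hsop $x^4, y^4$ is not a regular sequence. These elements are algebraically independent and $R$ is finite over $A$ (the remaining generators satisfy integral equations such as $(x^3 y)(xy^3) = x^4 y^4 \in A$). However, $r := x^6 y^2 \in R$ satisfies $y^4 r = x^4 \cdot x^2 y^6 \in (x^4) R$ while $r \notin (x^4) R$, because $r/x^4 = x^2 y^2 \notin R$ by the description of $M$. Hence $y^4$ is a zero-divisor on $R/(x^4) R$, so $R$ is not Cohen-Macaulay. For the ideal $I$, the equality $\height(I) = 1$ follows from $x^4 \in I$ together with $(x^3 y)^4 = x^{12} y^4 \in (x^4) R$, which gives $I \subseteq \sqrt{(x^4)}$ and hence $\height(I) \le \height((x^4)) = 1$.

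The main step is to show $I$ is Cohen-Macaulay over $R$, which I would deduce by proving $I$ is free as an $A$-module. A direct monomial check gives
\[
((4,0)+M) \cup ((3,1)+M) = \{(i,j) \in M : i \ge 3\},
\]
with the ``corner'' bidegrees $(6,2)$ and $(5,3)$ covered by the summand that does not arise from the natural translate (e.g.\ $(6,2) = (3,1)+(3,1)$). Hence $I$ has $K$-basis $\{x^i y^j \in R : i \ge 3\}$. The $A$-action on monomials is by translation by $(4,0)$ and $(0,4)$, so $A$-orbits inside $I$ are indexed by residue classes in $(\Z/4)^2$ with $i+j \equiv 0 \pmod{4}$; the four such classes have minimal representatives
\[
x^4,\quad x^3 y,\quad x^5 y^3,\quad x^6 y^2 \in I,
\]
yielding the $A$-module decomposition $I = A x^4 \oplus A x^3 y \oplus A x^5 y^3 \oplus A x^6 y^2$. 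Thus $I$ is free of rank $4$ over $A$, hence maximal Cohen-Macaulay over $A$, and therefore Cohen-Macaulay as an $R$-module.

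The main obstacle is the monomial bookkeeping around the single missing bidegree $(2,2)$: its absence simultaneously breaks the regular-sequence property of $x^4, y^4$ on $R$ and forces the careful handling of the corner cases $(6,2)$ and $(5,3)$ in the description of the monomial support of $I$.
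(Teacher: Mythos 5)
Your proof is correct and follows essentially the same route as the paper: both show that $I$ is free over $A=K[x^{4},y^{4}]$ with the identical four generators $x^{4}$, $x^{3}y$, $x^{5}y^{3}=x^{4}\cdot xy^{3}$, $x^{6}y^{2}=(x^{3}y)^{2}$ (you by computing the monomial support of $I$ and sorting it into residue classes modulo $4$, the paper by checking that the direct sum is closed under multiplication by the algebra generators, with directness via the same mod-$4$ congruence argument). The only substantive difference is that you give a self-contained proof that $R$ is not Cohen--Macaulay, exhibiting $x^{6}y^{2}$ as witnessing that $y^{4}$ is a zero-divisor on $R/(x^{4})R$, whereas the paper cites \cite[Example 2.5.4]{DerksenKemper} for this fact.
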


\begin{proof}
First note that $R$ is well known to be non-Cohen-Macaulay, see
\cite[Example 2.5.4]{DerksenKemper}, and as $x^{4}\cdot
y^{4}=(x^{3}y)\cdot(xy^{3})$, $R$ is also not factorial. Also
since $(x^{3}y)^{4}=y^{4}x^{8}\cdot x^{4}\in(x^{4})$ we have
$I\subseteq \sqrt{(x^{4})}$, which shows that the height of $I$ is
$1$. We consider the subalgebra $A:=K[x^{4},y^{4}]$ of $R$
generated by an hsop, and we will show that $I$ is a free
$A$-module, which implies that $I$ is Cohen-Macaulay as an
$R$-module. We set
\[
a:=x^{4},\quad p:=x^{3}y,\quad q:=xy^{3},\quad b:=y^{4}
\]
and claim that
\[
I=(a,p)=Aa\oplus Aaq\oplus Ap\oplus Ap^{2}.
\]
The inclusion ``$\supseteq$'' is clear. We first show that the sum
on the right hand side is indeed direct. Let $\epsilon$ denote the
map from the set of monomials of $K[x,y]$ to $\N_{0}^{2}$ given by
$\epsilon(x^{i}y^{j}):=(i,j)$. We compute the epsilon values of
the $A$-module generators modulo $4$:
\[
\epsilon(a)=(4,0),\quad \epsilon(aq)=(5,3)\equiv(1,3), \quad
\epsilon(p)=(3,1),\quad \epsilon(p^{2})=(6,2)\equiv(2,2).
\]
As $\epsilon(m)\equiv (0,0)$ for any monomial $m$ in $A$, it
follows that the $\epsilon$-values of monomials in $Aa$, $Aaq$,
$Ap$, $Ap^{2}$ fall into different congruence classes modulo $4$,
so the sum is indeed direct.

We now verify the inclusion ``$\subseteq$''. Clearly, $a,p\in
S:=Aa\oplus Aaq\oplus Ap\oplus Ap^{2}$, and $S$ is closed under
multiplication with $a$ and $b$. It remains to show that $S$ is
closed under multiplication with $p$ and $q$, which follows from
\[
\begin{array}{ll}
p(Aa)=Aap\subseteq Ap,\quad & q(Aa)=Aaq,\\
p(Aaq)=Aax^{4}y^{4}\subseteq Aa,& q(Aaq)=Ax^{6}y^{6}=Abp^{2}\subseteq Ap^{2}, \\

p(Ap)=Ap^{2},\quad& q(Ap)=Apq=Ax^{4}y^{4}\subseteq Aa,\\
p(Ap^{2})=Ax^{9}y^{3}=Aa^{2}q\subseteq Aaq, \quad\quad&
q(Ap^{2})=Ax^{7}y^{5}=Ax^4y^4p\subseteq Ap.
\end{array}
\]
\end{proof}
\begin{remark}
We thank Roger Wiegand from whom we learned that there are
theorems that say that, for some special classes of rings, non-free
maximal Cohen-Macaulay modules have high ranks. Since the rank of
an ideal in a domain is one, and non-principal ideals are non-free, Lemma \ref{height2notCM} and Theorem
\ref{factorialCM} readily follows for such rings whose non-free
maximal Cohen-Macaulay modules are known to have a high rank.  But
we can not expect that a non-free maximal Cohen-Macaulay module
will always have rank $>1$ as the previous two examples
demonstrate.
\end{remark}
\section{Depth of ideals and quotient of the transfer in invariant rings}
We start with an application of Theorem \ref{factorialCM} to
modular invariant rings.

\begin{theorem}\label{invariantrings}
Assume that $K$ is of positive characteristic $p$ and $G$ is a
finite $p$-group. For any finite dimensional linear representation
$V$ of $G$ over $K$ such that the invariant ring $K[V]^{G}$ is not
Cohen-Macaulay, no nonzero homogeneous ideal of $K[V]^{G}$ is
Cohen-Macaulay (as a $K[V]^{G}$-module).
\end{theorem}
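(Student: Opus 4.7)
The plan is to deduce this from Theorem~\ref{factorialCM} applied to the ring $R = K[V]^G$. That $R$ is a graded affine domain is immediate, since it is a finitely generated (by Noether finiteness) homogeneous $K$-subalgebra of the polynomial ring $K[V]$. The only remaining hypothesis of Theorem~\ref{factorialCM} to verify is thus that $R$ is factorial.

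For factoriality I would invoke the following standard fact from modular invariant theory: if $G$ is a finite $p$-group and $\text{char}(K) = p$, then $K[V]^G$ is a unique factorization domain. The argument proceeds via the Samuel--Nakajima description of the divisor class group of an invariant ring, which identifies $\text{Cl}(K[V]^G)$ as a subquotient of the character group $\text{Hom}(G, K^*)$. Since $G$ is a $p$-group, every character takes values in the $p$-power torsion subgroup of $K^*$; but in characteristic $p$, the equation $a^p = 1$ in $K^*$ forces $(a-1)^p = a^p - 1 = 0$ and hence $a = 1$, so this torsion subgroup is trivial. Consequently $\text{Hom}(G, K^*) = 0$, the class group vanishes, and $K[V]^G$ is factorial.

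Once factoriality is in place, the theorem follows at once: $K[V]^G$ is by assumption a graded factorial affine domain that is not Cohen--Macaulay, so the second assertion of Theorem~\ref{factorialCM} yields that $K[V]^G$ has no nonzero homogeneous Cohen--Macaulay ideal, which is exactly the desired conclusion.

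The only real obstacle is citing the factoriality of $K[V]^G$ correctly, since the general Samuel--Nakajima formula carries a correction term coming from pseudo-reflections (transvections, in our modular setting). This correction is harmless here, however, because the whole character group $\text{Hom}(G, K^*)$ already vanishes, so irrespective of how pseudo-reflections enter the formula the class group is forced to be trivial. Alternatively, one could bypass the class-group machinery and argue by induction on $|G|$: pick a normal subgroup $H \trianglelefteq G$ of index $p$, observe that $K[V]^H$ is factorial by induction, and then apply the same character-group vanishing argument to the cyclic $p$-action of $G/H$ on $K[V]^H$.
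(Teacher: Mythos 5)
Your proposal is correct and takes essentially the same route as the paper: both argue that $\operatorname{Hom}(G,K^{\times})$ is trivial because a character of a $p$-group takes values in $p$-power roots of unity, which are all equal to $1$ in characteristic $p$, then invoke Nakajima's theorem on the divisor class group to conclude that $K[V]^{G}$ is factorial, and finally apply Theorem~\ref{factorialCM}. The extra remarks about the pseudo-reflection correction term and the alternative induction on $|G|$ are unnecessary but harmless.
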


\begin{proof}
As a $p$-group, $G$ does not admit any non-trivial group
homomorphism $\varphi: G\rightarrow K^{\times}$, as for every
$\sigma\in G$ we have $(\varphi(\sigma))^{|G|}-1=0$, which implies
$\varphi(\sigma)=1$. Therefore $K[V]^{G}$ is factorial by a
theorem of Nakajima (see \cite[Theorem 2.11]{Nakajima}, \cite[Corollary
3.9.3]{MR1249931}). The claim now follows from Theorem
\ref{factorialCM}.
\end{proof}

For the rest of the paper we specialize to a cyclic group $G$ of
prime order $p$ equal to the characteristic of the field $K$,
which we assume to be algebraically closed. Fix a generator
$\sigma$ of $G$. There are exactly $p$ indecomposable $G$-modules
$V_1, \dots , V_{p}$ over $K$ and each indecomposable module $V_i$
is afforded by a Jordan block of dimension $i$ with 1's on the
diagonal. Let $V$ be an arbitrary $G$-module over $K$. Assume that
$V$ has $l$ summands and so we can write  $V=\sum_{1\le j\le
l}V_{n_j}$. Notice that $l=\dim V^{G}$. We also assume that none
of these summands is trivial, i.e., $n_j>1$ for $1\le j\le l$.
 We set $ K [V]=  K[x_{i,j} \mid 1\le i\le n_j, \;
1\le j\le l]$ and the action of $\sigma$ is given by $\sigma
(x_{i,j})=x_{i,j}+x_{i-1, j}$ for $1< i\le n_j$ and $\sigma
(x_{1,j})=x_{1,j}$. For $f\in K[V]$ let $N(f)=\prod_{\sigma\in
G}\sigma(f)$ denote the norm of $f$. Notice that for $1\le i\le
n_j$,
 $N(x_{i,j})$ is monic of degree $p$ as a polynomial in $x_{i,j}$. By a famous theorem of
Ellingsrud and Skjelbred \cite{Ellingsrud}, $\depth (K[V]^G)=\min \{
\dim_{K}(V^{G})+2, \dim_{K}(V)  \}=\min \{ l+2, \dim_{K}(V) \}$.
In \cite{CampEtAl}, this result is extended to some other classes
of groups, and the proof is also made more elementary and
explicit. Restricting the results of \cite{CampEtAl} to our case,
we get the following description of a maximal $K[V]^{G}$-regular
sequence, which allows to explicitly construct an ideal of a given
depth at most that of the invariant ring.

\begin{proposition}\label{regSeqCp}
A maximal $K[V]^{G}$-regular sequence   is given by
\[
\begin{array}{ccl}
x_{1,1}, x_{1,2}, N(x_{n_1,1}), \dots,
N(x_{n_l,l}) &\text{ if }&l>1;\\
x_{1,1},N(x_{2,1}),N(x_{n_{1},1}) &\text{ if
}&l=1,\,\,n_{1}>2;\\
x_{1,1},N(x_{2,1}) &\text{ if }& l=1,\,\,n_{1}=2.
\end{array}
\]
Let $I_{k}$ denote the ideal of $K[V]^{G}$ generated by the first
$k$ elements of the sequence. Then we have $\depth
I_k=\depth(K[V])^{G}+1-k$ for $1\le k\le \depth(K[V]^{G})$.
\end{proposition}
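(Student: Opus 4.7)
The plan is to derive both assertions by combining the Ellingsrud--Skjelbred depth formula quoted just above the proposition with the explicit regular sequences constructed in \cite{CampEtAl}, and then to read off the depths of the $I_k$ from Lemma \ref{DepthRegSeq}.

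First, I would verify that the length of the stated sequence matches $\depth(K[V]^{G})=\min\{l+2,\dim_{K}(V)\}$ in each of the three cases. When $l>1$, each $n_{j}\ges 2$ gives $\dim_{K}(V)=\sum_{j}n_{j}\ges 2l\ges l+2$, so the depth is $l+2$, which agrees with the length $2+l$ of the first sequence. When $l=1$ and $n_{1}>2$ we have $\dim_{K}(V)=n_{1}\ges 3=l+2$, so the depth is $3$, agreeing with the length of the second sequence. When $l=1$ and $n_{1}=2$ the depth is $\min\{3,2\}=2$, which is the length of the third sequence.

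Second, I would invoke \cite{CampEtAl} to conclude that the listed homogeneous elements actually form a $K[V]^{G}$-regular sequence; this is the key input and is the only potentially technical ingredient, since verifying the non-zerodivisor property at each step requires understanding the $K[V]^{G}$-module structure of the quotients. Once regularity is known, maximality is automatic because the length already equals $\depth(K[V]^{G})$, which bounds the length of any $K[V]^{G}$-regular sequence in the homogeneous maximal ideal.

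Third, the depth statement for $I_{k}$ is then an immediate consequence of Lemma \ref{DepthRegSeq}: the first $k$ terms of the sequence, being a regular sequence of homogeneous elements of positive degree in $K[V]^{G}$, generate the ideal $I_{k}$, so Lemma \ref{DepthRegSeq} applied to the graded affine ring $R=K[V]^{G}$ yields
\[
\depth(I_{k})=\depth(K[V]^{G})+1-k
\]
for every $1\les k\les \depth(K[V]^{G})$. The main obstacle, if any, is the regularity assertion imported from \cite{CampEtAl}; everything else is bookkeeping plus an application of the already-established Lemma \ref{DepthRegSeq}.
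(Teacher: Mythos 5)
Your overall architecture coincides with the paper's: verify that the length of each listed sequence equals $\depth(K[V]^{G})=\min\{l+2,\dim_{K}(V)\}$ (your case analysis here is correct, using $n_{j}\ge 2$ for all $j$), obtain regularity from \cite{CampEtAl}, conclude maximality because no regular sequence can be longer than the depth, and read off $\depth(I_{k})=\depth(K[V]^{G})+1-k$ from Lemma \ref{DepthRegSeq}. The last two steps are exactly as in the paper and are fine.

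The gap is in your second step, which is precisely where the paper does its actual (if short) work and which you leave as an unspecified appeal to \cite{CampEtAl}. That reference does not simply list these sequences; the paper has to extract them by a two-part argument. First, the initial two elements $x_{1,1}$ and $b$ (where $b$ is $x_{1,2}$ in the first case and $N(x_{2,1})=x_{2,1}^{p}-x_{2,1}x_{1,1}^{p-1}$ in the other two) form a regular sequence in $K[V]^{G}$ because they are coprime invariants in the factorial ring $K[V]$. Second, the sequence is extended one norm at a time by induction: at stage $k$ one applies \cite[Corollary 17]{CampEtAl} to the one-dimensional fixed submodule $U=Ke_{n_{k},k}$, and this requires checking the hypothesis that the variable $x_{n_{k},k}$ does not occur in any of the previously chosen elements of the sequence. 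Without identifying this mechanism (or supplying a substitute), the assertion that these particular elements form a $K[V]^{G}$-regular sequence is not established; "invoke \cite{CampEtAl}" is naming the source of the difficulty rather than resolving it.
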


\begin{proof}
Let $b$ denote the second element of the sequence, i.e., $x_{1,2}$
or $N(x_{2,1})=x_{2,1}^{p}-x_{2,1}x_{1,1}^{p-1}$. As   $x_{1,1}$
and  $b$ are coprime in $K[V]$, and both are invariant, they form
a regular sequence in $K[V]^{G}$. Proceeding by induction, we
assume that the elements
$x_{1,1},b,N(x_{n_{1},1}),\ldots,N(x_{n_{k-1},k-1})$ form a
regular sequence for some $k<l$. Consider the standard basis
vector $e_{n_{k},k}\in V$ corresponding to the variable
$x_{n_{k},k}$. Then $e_{n_{k},k}$ is a fixed point, and
$U:=Ke_{n_{k},k}$ is a $1$-dimensional submodule of $V$. Since no
element of the regular sequence
$x_{1,1},b,N(x_{n_{1},1}),\ldots,N(x_{n_{k-1},k-1})$ contains the
variable $x_{n_{k},k}$, \cite[Corollary 17]{CampEtAl} applies to
$U$ and $x_{{n_{k},k}}$, so the regular sequence can be extended
by the element $N(x_{n_{k},k})$. Since the length of the given
sequence equals $\depth(K[V]^{G})$ in each case, we are done. The
final statement now follows from Lemma \ref{DepthRegSeq}.
\end{proof}

The transfer ideal $I^G$ is defined as the image of the transfer
map $\tra :K[V]\rightarrow K[V]^G$ given by $\tra
(f)=\sum_{i=0}^{p-1}\sigma^{i}(f)$. The transfer ideal often plays
an important role in computing the invariant ring and its
 various
aspects  have been subject to  research.
 The vanishing set of
$I^{G}$ equals the fixed point space $V^{G}$ (see \cite[Theorem
9.0.10]{EddyDavidBook}), in particular we have
$\dim(K[V]^{G}/I^{G})=\dim(V^{G})=l$.  We will show that
$K[V]^{G}/I^{G}$ is Cohen-Macaulay, which also allows us to
compute the depth of the transfer ideal. To do this we prove that
$N(x_{n_{1},1}), \ldots  ,N(x_{n_l,l})$ is a $K[V]^G/I^G$-regular
sequence. Let $f\in K[V]$ and $1\le j_1< j_2< \dots <j_t\le l$ be
arbitrary. Since $N(x_{n_{j_1},j_1})$ is a monic polynomial of
degree $p$ in $x_{n_{j_1},j_1}$, we can write
$f=q_1N(x_{n_{j_1},j_1})+r_1$, where
$\deg_{x_{n_{j_1},j_1}}r_1<p$. Next we divide $r_1$ by
$N(x_{n_{j_2},j_2})$ and we get a decomposition
$f=q_1N(x_{n_{j_1},j_1})+q_2N(x_{n_{j_2},j_2})+r_2$, where
$\deg_{x_{n_{j_1},j_1}}r_2, \deg_{x_{n_{j_2},j_2}}r_2<p$ and
$\deg_{x_{n_{j_1},j_1}}q_2<p$. In this way we get a decomposition
$$f=q_1N(x_{n_{j_1},j_1})+ \cdots +q_tN(x_{n_{j_t},j_t})+r,$$
where $\deg_{x_{n_{j_{i}},j_{i}}}r<p$ for $1\le i\le t$ and
$\deg_{x_{n_{j_{i}},j_{i}}}q_{i'}<p$ for $i<i'$. This is called the norm
decomposition and $r$ is called the remainder of $f$ with respect
to $N(x_{n_{j_1},j_1}), \dots , N(x_{n_{j_t},j_t})$. Notice that
$r$ is unique. If $f\in K[V]^{G}$ is an invariant, then the quotients $q_{i}$ for $1\le i\le
t$ and
the remainder $r$ are also invariant, see \cite[Proposition 2.1]{MR1897423}.

\begin{theorem} \label{transfer}
The algebra $K[V]^{G}/I^{G}$ is Cohen-Macaulay, and an hsop is
given by the set $\{N(x_{n_{j},j})+I^{G}\mid 1\le j\le l\}$. In
particular, we have $\depth(I^{G})=l+1$.
\end{theorem}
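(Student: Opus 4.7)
The plan is to prove Cohen-Macaulayness of $K[V]^G/I^G$ by exhibiting it as a free module over the polynomial subring $A:=K[y_1,\ldots,y_l]$ generated by the norms $y_j:=N(x_{n_j,j})$, and then to read off $\depth(I^G)$ using the standard comparison lemmas of Section~2.

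I would first verify that $\bar y_1,\ldots,\bar y_l$ really is an hsop of $K[V]^G/I^G$: each $y_j$ is a polynomial in the block variables $x_{1,j},\ldots,x_{n_j,j}$ alone, so the $y_j$ are algebraically independent in $K[V]$; restricting to $V^G$ (on which $y_j$ becomes $x_{n_j,j}^p$) shows this independence persists in the quotient. Since $\dim(K[V]^G/I^G)=l$ matches the number of elements, they form an hsop.

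The core step uses the norm decomposition to produce a $G$-equivariant tensor splitting. Since each $y_j$ is monic of degree $p$ in $x_{n_j,j}$ and contains no other $x_{n_k,k}$, iterated norm division gives
\[
K[V] \;=\; A\otimes_{K}W,
\]
where $W$ denotes the $K$-span of monomials of $x_{n_j,j}$-degree less than $p$ for every $j$. The critical observation is that $\sigma(x_{n_j,j})=x_{n_j,j}+x_{n_j-1,j}$ and $x_{n_k,k}$ appears in no other $\sigma(x_{i,k'})$, so $\sigma$ preserves each $x_{n_j,j}$-degree and therefore stabilises $W$. Because $A$ is pointwise $G$-fixed, taking invariants commutes with the tensor decomposition, giving $K[V]^G = A\otimes_{K}W^G$; the $A$-linearity of $\tra$ (for the same reason) then gives $I^G = A\otimes_{K}\tra(W)$, whence
\[
K[V]^G/I^G \;=\; A\otimes_{K}\bigl(W^G/\tra(W)\bigr).
\]
This is a free $A$-module. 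Being also finitely generated over $A$ (forced by $\dim A=\dim(K[V]^G/I^G)=l$), it has finite rank; by the fact from Section~2 that depth is independent of the ring over which it is computed, $\depth(K[V]^G/I^G)=\depth_{A}(K[V]^G/I^G)=\dim A = l$, so $K[V]^G/I^G$ is Cohen-Macaulay.

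Finally, Ellingsrud--Skjelbred gives $\depth(K[V]^G)=\min\{l+2,\dim V\}$, and $n_j\ge 2$ for all $j$ forces $\dim V\ge 2l\ge l+1$, so $\depth(K[V]^G)>l=\depth(K[V]^G/I^G)$ in every case. Lemma~\ref{StandardCor1}(a)(ii) then yields $\depth(I^G)=l+1$. The main obstacle in the plan is verifying the $G$-equivariance of the decomposition $K[V]=A\otimes_K W$, i.e., that $\sigma$ stabilises $W$; once this degree-bookkeeping is done, the invariant and transfer versions are immediate consequences of $G$ acting trivially on $A$ and the $A$-linearity of $\tra$.
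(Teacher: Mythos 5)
Your argument is correct, but it takes a genuinely different route from the paper. The paper works entirely inside $K[V]^{G}/I^{G}$: it uses the norm decomposition to show directly, by induction and comparison of $x_{n_{j},j}$-degrees, that $\overline{N(x_{n_{1},1})},\ldots,\overline{N(x_{n_{l},l})}$ is a regular sequence on $K[V]^{G}/I^{G}$, and then concludes Cohen--Macaulayness because the length of the sequence equals $\dim(K[V]^{G}/I^{G})=l$. You instead lift the whole problem to $K[V]$ and build a global $G$-equivariant splitting $K[V]=A\otimes_{K}W$ over $A=K[N(x_{n_{1},1}),\ldots,N(x_{n_{l},l})]$, which after taking invariants and images of the transfer yields $K[V]^{G}/I^{G}\cong A\otimes_{K}(W^{G}/\tra(W))$, i.e.\ freeness over $A$. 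Both proofs run on the same engine (monic division by the norms, equivariance coming from the fact that $x_{n_{j},j}$ occurs only in $\sigma(x_{n_{j},j})=x_{n_{j},j}+x_{n_{j}-1,j}$, so $\sigma$ does not raise the $x_{n_{j},j}$-degree and hence stabilises $W$), but your version is more structural: it produces an explicit $A$-module decomposition rather than just a regular sequence, and the regular-sequence statement falls out as a corollary. The price is that you must justify two global facts the paper's local induction avoids: injectivity of the multiplication map $A\otimes_{K}W\to K[V]$ (this is exactly the uniqueness of the norm decomposition, provable by a leading-term argument) and the identification $(A\otimes_{K}W)^{G}=A\otimes_{K}W^{G}$, $\tra(A\otimes_{K}W)=A\otimes_{K}\tra(W)$, both of which are fine since $G$ acts trivially on $A$. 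The closing step, $\depth(I^{G})=l+1$ via Ellingsrud--Skjelbred and Lemma~\ref{StandardCor1}(a)(ii), is identical to the paper's.

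One spot needs more care: freeness of $K[V]^{G}/I^{G}$ over $A$ gives Cohen--Macaulayness only once you know the module is \emph{finitely generated} over $A$, i.e.\ that the $\overline{N(x_{n_{j},j})}$ genuinely form an hsop, and neither ``the $y_{j}$ are algebraically independent and their number matches the dimension'' nor ``$\dim A=\dim(K[V]^{G}/I^{G})$'' implies this by itself (in $K[x,y]/(xy)$ the image of $K[x]$ has the right dimension but the quotient is not module-finite over it). The clean fix is the one the paper uses in its final proposition: the zero set of $I^{G}$ is $V^{G}$, and $N(x_{n_{j},j})$ restricts to $\lambda_{j}^{p}$ there, so $I^{G}+(N(x_{n_{1},1}),\ldots,N(x_{n_{l},l}))$ cuts out $\{0\}$ and the Nullstellensatz plus graded Nakayama give finite generation. (Alternatively, a Hilbert-series argument shows that a free graded $A$-module of dimension $\dim A$ must have finite rank.) With that inserted, your proof is complete.
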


\begin{proof}
Let $\overline{f}:=f+I^{G}$ for $f\in K[V]^{G}$. We show that
$\overline{N(x_{n_{1},1})}, \ldots ,\overline{N(x_{n_l,l})}$ forms
a regular sequence for $K[V]^G/I^G$.  As its length $l$ equals the
dimension of $K[V]^{G}/I^{G}$, it follows that this ring is
Cohen-Macaulay. First we show that $\overline{N(x_{n_{i},i})}$ is
a $K[V]^G/I^G$-regular element for $1\le i\le l$. Assume
$fN(x_{n_{i},i})\in I^G$ for some invariant $f$. Then
$fN(x_{n_{i},i})=\tra (g)$ for some $g\in K[V]$. Consider the norm
decomposition $g=qN(x_{n_{i},i})+r$ of $g$ with respect to
$N(x_{n_{i},i})$. So $fN(x_{n_{i},i})=\tra
(qN(x_{n_{i},i})+r)=\tra (q)N(x_{n_{i},i})+\tra (r)$, hence
$0=(f-\tra(q))N(x_{n_{i},i})+\tra (r)$. Note that the group action
preserves the $x_{n_{i},i}$-degree, so we have
$\deg_{x_{n_{i},i}}(\tra(r))\le
\deg_{x_{n_{i},i}}(r)<p=\deg_{x_{n_{i},i}}(N(x_{n_{i},i}))$.
 So we get that $f-\tra (q)=0$  and $\tra (r)=0$. Therefore $f\in I^{G}$, and
$\overline{N(x_{n_{i},i})}$ is a $K[V]^G/I^G$-regular element. Assume now by
induction that $\overline{N(x_{n_{1},1})}, \ldots
,\overline{N(x_{n_{j-1},j-1})}$ is a
$K[V]^G/I^G$-regular sequence, and we have
\begin{equation}\label{eq1}
fN(x_{n_{j},j})=f_1N(x_{n_{1},1})+\cdots
+f_{j-1}N(x_{n_{j-1},j-1})+\tra (t),
\end{equation}
where $ f,  f_i\in K[V]^G$ for $1\le i\le j-1$ and $t\in K[V]$.
Consider the norm decompositions of $f$ and $t$ with respect to
$N(x_{n_{1},1}), \dots ,N(x_{n_{j-1},j-1})$. Since the quotient
and the remainder in the decomposition of $f$ are invariants, we
can replace $f$ by its remainder. As for $\tra (t)$, notice that
$\tra (t)$ and the transfer of the remainder of $t$ differ by a
combination of $N(x_{n_{1},1}), \dots ,N(x_{n_{j-1},j-1})$ by
invariants. So we can replace $\tra (t)$ with the transfer of the
remainder of $t$. Moreover, by considering the norm decomposition
of $f_i$ with respect to $N(x_{n_{1},1}), \dots
,N(x_{n_{i-1},i-1})$ for $1\le i\le j-1$, we replace $f_i$ with
its corresponding remainder. Therefore we may assume that
$\deg_{x_{n_{i'},i'}}f_i<p$ for $1\le i'<i$ and $1\le i\le j-1$.
Notice also that the degree of $f$ and $\tra (t)$ with respect to
any variable $x_{n_{i'},i'}$ is $<p$ for $1\le i'\le j-1$. Now
considering  Equation \eqref{eq1} as a polynomial equation in the
variable $x_{n_1,1}$ gives that $f_1=0$. Then comparing the
coefficients of $x_{n_2,2}$ gives $f_2=0$. Along the same way we
get $f_1=f_2=\cdots =f_{j-1}=0$. So Equation \eqref{eq1} becomes
$fN(x_{n_{j},j})=\tra (t)$. But since  $\overline{N(x_{n_{j},j})}$
is a $K[V]^G/I^G$-regular element, we have $f\in I^G$ as desired.
This shows that $\overline{N(x_{n_{1},1})}, \dots
,\overline{N(x_{n_l,l})}$ is a regular sequence. From
$\depth(K[V]^G/I^G)=l<\depth(K[V]^{G})=\min\{l+2,\dim_{K}(V)\}$
(we assume a non-trivial action) and Lemma~\ref{StandardCor1}, it
now follows that $\depth(I^{G})=l+1$.
\end{proof}

We also prove a reduction result for the depth of a module over
the invariant ring, which is based on the following lemma. The
statement is probably folklore, but for the convenience of the
reader and the lack of a reference, we provide a proof.

\begin{lemma}
Assume that $R$ is a graded affine ring and $M$ is a finitely
generated graded nonzero $R$-module. If $h_{1},\ldots,h_{r}\in
R_{+}$ form a homogeneous $M$-regular sequence and $I$ is a
homogeneous ideal of $R$ such that
$\sqrt{I+(h_{1},\ldots,h_{r})R}=R_{+}$, then
\[
\depth(M)=\grade(I,M/(h_{1},\ldots,h_{r})M)+r.
\]
\end{lemma}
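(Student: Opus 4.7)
The plan is to establish the identity via the chain
\begin{align*}
\depth(M) = \grade(R_+,M) &= \grade(I+(h_1,\ldots,h_r)R,\,M)\\
&= r + \grade(I+(h_1,\ldots,h_r)R,\,M/(h_1,\ldots,h_r)M)\\
&= r + \grade(I,\,M/(h_1,\ldots,h_r)M),
\end{align*}
each equality being a standard manipulation of grade, which I sketch in turn.

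The first equality rests on the principle that $\grade(J,M)$ depends only on $\sqrt{J}$ for finitely generated $M$. The cleanest argument uses the Ext-characterization $\grade(J,M)=\min\{i\mid \operatorname{Ext}^{i}_{R}(R/J,M)\neq 0\}$ from \cite[Thm.~1.2.5]{MR1251956}, combined with the observation that $\sqrt{J_1}=\sqrt{J_2}$ implies $J_1^{n}\subseteq J_2$ and $J_2^{n}\subseteq J_1$ for some $n$, and that $\grade(J^{n},M)=\grade(J,M)$ because a homogeneous element is $M$-regular on a quotient iff each of its powers is. Applied to $J=I+(h_1,\ldots,h_r)R$, whose radical equals $R_+$ by hypothesis, this yields the first equality.

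The second equality is the standard fact that factoring out a length-$r$ $M$-regular sequence contained in $J$ drops $\grade(J,\cdot)$ by exactly $r$, namely \cite[Prop.~1.2.10(e)]{MR1251956}. For the third equality, observe that $(h_1,\ldots,h_r)R$ annihilates $N:=M/(h_1,\ldots,h_r)M$, so for any $y\in I+(h_1,\ldots,h_r)R$ written as $y=a+b$ with $a\in I$ and $b\in(h_1,\ldots,h_r)R$, multiplication by $y$ and by $a$ agree on $N$ and on every quotient thereof. Hence an $N$-regular sequence in $I+(h_1,\ldots,h_r)R$ yields an $N$-regular sequence of the same length in $I$; together with the trivial inequality $\grade(I,N)\le\grade(I+(h_1,\ldots,h_r)R,\,N)$, this gives equality.

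The main obstacle is really just pinning down the radical-invariance of grade in the graded affine setting cleanly; the remaining two equalities are essentially bookkeeping once the grade formalism of \cite{MR1251956} is invoked, so the proof should be short.
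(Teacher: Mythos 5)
Your proof is correct and rests on the same two ingredients as the paper's: passing to powers so that a maximal regular sequence lands in $I+(h_{1},\ldots,h_{r})R$ (i.e.\ radical-invariance of grade), and the observation that the $(h_{1},\ldots,h_{r})$-component of each such element annihilates $M/(h_{1},\ldots,h_{r})M$, so it can be discarded to leave a regular sequence inside $I$. The paper merely packages these steps as a single direct comparison $\grade(I,M/(h_{1},\ldots,h_{r})M)\ge\grade(R_{+},M/(h_{1},\ldots,h_{r})M)$ after first writing $\depth(M)=\depth(M/(h_{1},\ldots,h_{r})M)+r$, whereas you organize them as a chain of standard grade identities; the content is the same.
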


\begin{proof}
As  the  homogeneous elements $h_{1},\ldots,h_{r}\in R_{+}$ form an $M$-regular sequence, we have that $\depth
(M)=\depth(M/(h_{1},\ldots,h_{r})M)+r$.
We show that \[
\grade (I, M/(h_{1},\ldots,h_{r})M)\ge\grade(R_{+}, M/(h_{1},\ldots,h_{r})M),\]
 as the reverse inequality is obvious.
Let $f_1, \dots , f_d\in R_{+}$ be a maximal homogeneous
$M/(h_{1},\ldots,h_{r})M$-regular sequence.
Since taking powers does not hurt regularity, we assume
that all elements in the sequence are contained in
$I+(h_{1},\ldots,h_{r})R$. Therefore for $1\le i\le d$  we can write
$f_{i}=g_{i}+b_{i}$ with homogeneous elements $g_i\in I$ and $b_{i}\in(h_{1},\ldots,h_{r})R$.
 Since  $b_{i}$ is
in the annihilator of \[ M/(h_{1},\ldots,h_{r}, f_1, \dots ,
f_{i-1})M=M/(h_{1},\ldots,h_{r}, g_1, \dots , g_{i-1})M
\]
it follows that $g_i$ is regular on $M/(h_{1},\ldots,h_{r}, g_1, \dots ,
g_{i-1})M$ as well. Hence the elements $g_1, \ldots , g_d$ of $I$ form
 an $M/(h_{1},\ldots,h_{r})M$-regular sequence.
\end{proof}

We recall that for any ideal $I$ of the invariant ring $K[V]^{G}$, we have
$\sqrt{I}=\sqrt{IK[V]}\cap K[V]^{G}$. This holds generally when $G$ is
a reductive group \cite[Lemma 3.4.2]{Newstead}, and an elementary proof for
finite groups can be found in \cite[Lemma 12.1.1]{EddyDavidBook}.

\begin{proposition}
 Let $M$ be a finitely generated  graded
$K[V]^{G}$-module on which the norms $N(x_{n_{1},1}),\ldots,N(x_{n_{l},l})$ form an
$M$-regular sequence.
Then \[
\depth (M)=\grade (I^G, M/(N(x_{n_1,1}),\ldots,N(x_{n_{l},l}))M)+l.\]
\end{proposition}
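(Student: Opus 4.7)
The plan is to apply the preceding lemma directly, with $R = K[V]^{G}$, $r = l$, the regular sequence taken to be $h_{i} = N(x_{n_{i},i})$, and the ideal taken to be $I = I^{G}$. The $M$-regularity of the norms is part of the hypothesis, so everything reduces to verifying the radical condition
\[
\sqrt{I^{G} + (N(x_{n_{1},1}),\ldots,N(x_{n_{l},l}))K[V]^{G}} = K[V]^{G}_{+}.
\]

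To establish this equality, I would invoke Theorem~\ref{transfer}, which asserts that $K[V]^{G}/I^{G}$ is Cohen-Macaulay of Krull dimension $l$ with homogeneous system of parameters given by the cosets $\overline{N(x_{n_{j},j})}$ for $1 \le j \le l$. Quotienting further by these norms therefore yields a finite-dimensional (Artinian) graded $K$-algebra
\[
K[V]^{G} \big/ \bigl(I^{G} + (N(x_{n_{1},1}),\ldots,N(x_{n_{l},l}))K[V]^{G}\bigr).
\]
Since the positive part of any finite-dimensional graded $K$-algebra is nilpotent, every homogeneous element of $K[V]^{G}_{+}$ has some power contained in $I^{G} + (N(x_{n_{1},1}),\ldots,N(x_{n_{l},l}))K[V]^{G}$. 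The reverse inclusion is automatic, so the required radical equality holds.

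Feeding this into the preceding lemma then produces exactly the claimed identity
\[
\depth(M) = \grade\bigl(I^{G},\, M/(N(x_{n_{1},1}),\ldots,N(x_{n_{l},l}))M\bigr) + l.
\]

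The main content of the argument is recognizing that Theorem~\ref{transfer} supplies precisely the radical hypothesis required by the preceding lemma; beyond this observation, no technical obstacle is anticipated. As a sanity check, one could alternatively verify the radical condition geometrically: using the recalled identity $\sqrt{J} = \sqrt{JK[V]} \cap K[V]^{G}$ together with $V(I^{G}K[V]) = V^{G}$ and the fact that $N(x_{n_{j},j})$ restricts to $x_{n_{j},j}^{p}$ on $V^{G}$, one sees that the joint vanishing locus in $V$ of $I^{G}K[V]$ and the norms is just the origin, from which the radical equality follows.
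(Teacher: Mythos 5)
Your proof is correct, but your primary route differs from the paper's. You deduce the radical condition $\sqrt{I^{G}+(N(x_{n_{1},1}),\ldots,N(x_{n_{l},l}))}=K[V]^{G}_{+}$ from Theorem~\ref{transfer}: since the cosets of the norms form an hsop of the $l$-dimensional ring $K[V]^{G}/I^{G}$, the further quotient by the norms is Artinian graded with degree-zero part $K$, so its positive part is nilpotent; this is a clean algebraic argument and, as a bonus, it does not need the Nullstellensatz or the contraction formula $\sqrt{J}=\sqrt{JK[V]}\cap K[V]^{G}$. The paper instead argues geometrically (your ``sanity check''): the zero set of $I^{G}K[V]$ is $V^{G}$, the norm $N(x_{n_{i},i})$ evaluates to $\lambda_{i}^{p}$ at $\sum_{i}\lambda_{i}e_{n_{i},i}\in V^{G}$, so the joint zero locus is the origin; the Nullstellensatz gives $\sqrt{(I^{G}+(\text{norms}))K[V]}=K[V]_{+}$, and contracting to $K[V]^{G}$ gives the required equality. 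The paper's route is logically lighter in that it uses only the description of the vanishing set of the transfer ideal rather than the full hsop statement of Theorem~\ref{transfer} (whose proof is the most technical part of the section), whereas your route stays entirely inside $K[V]^{G}$ and makes the dependence on Theorem~\ref{transfer} explicit. Both are valid since Theorem~\ref{transfer} precedes the proposition and does not depend on it, so there is no circularity.
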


\begin{proof}
We have already mentioned that the zero set of $I^{G}$ is given by
$V^{G}=\bigoplus_{i=1}^{l}Ke_{n_{i},i}$. As for an element
$v=\sum_{i=1}^{l}\lambda_{i}e_{n_{i},i}\in V^{G}$ with
$\lambda_{i}\in K$, we have  $N(x_{n_{i},i})(v)=\lambda_{i}^{p}$,
the common zero set of
$I^{G}+(N(x_{n_{1},1}),\ldots,N(x_{n_{l},l}))$ is zero, hence
by the Nullstellensatz
$\sqrt{(I^{G}+(N(x_{n_{1},1}),\ldots,N(x_{n_{l},l})))K[V]}=K[V]_{+}$.
From the paragraph before the proposition we obtain that the
radical ideal of $I^{G}+(N(x_{n_{1},1}),\ldots,N(x_{n_{l},l}))$
equals $K[V]^{G}_{+}$, and the lemma above applies.
\end{proof}

Examples where the proposition applies include the case $l=1$ and $M=I$ a
nonzero homogenous ideal of $K[V]^{G}$. The corollary also applies for arbitrary
$l$ and $M=K[V]^{G}$ by Proposition \ref{regSeqCp}. In the ``non-trivial''
cases where $\depth(K[V]^{G})=l+2$, it follows from
$\depth(K[V]^{G})=\grade(I^{G},K[V]^{G}/(N(x_{n_{1},1}),\ldots,N(x_{n_{l},l})))+l$,
that
\[
\grade(I^{G},K[V]^{G}/(N(x_{n_{1},1}),\ldots,N(x_{n_{l},l})))=2.
\]
Therefore, there is a maximal $K[V]^{G}$-regular sequence
consisting of the $l$ norms and two transfers. Also compare with
the known fact that $\grade(I^{G},K[V]^{G})=2$ in these cases, see
\cite[Propositions 20 and 22]{CampEtAl}. As
$\depth(K[V]^{G})=l+2$, this also shows that  $\depth(M)\ne
\grade(I^{G}, M)$ in general.

\begin{ack}
We thank T\" {u}bitak for
funding a visit of the first author to Bilkent University, and
Gregor Kemper for inviting the second author to TU
M\"unchen. Special thanks go to Fabian Reimers for many inspiring
conversations, and in particular for showing Lemma
\ref{StandardCor1} to us.
\end{ack}

\bibliographystyle{plain}
\bibliography{OurBib}
\end{document}